\newtheorem{theorem}{Theorem}[section]
\newtheorem{lemma}[theorem]{Lemma}
\providecommand{\keywords}[1]
{
  \small	
  \textbf{\textit{Keywords---}} #1
}
\title{Gravitational billiards - bouncing inside a paraboloid cavity}
\author{Daniel Jaud}
\thanks{OrcID: 0000-0002-0163-7586}
\address{Gymnasium Holzkirchen, Germany}
\email{Daniel.Jaud.PhD@gmail.com}
\date{}
\begin{document}
\maketitle
\flushbottom


\begin{abstract}
\noindent
In this work the confined domains for a point-like particle propagating within the boundary of an ideally reflecting paraboloid mirror are derived. Thereby it is proven that all consecutive flight parabola foci points lie on the surface of a common sphere of radius $R$. The main results are illustrated in various limiting cases and are compared to its two-dimensional counterpart.
\end{abstract}

\vspace*{0.4cm}
\keywords{\textbf{Keywords:} billiards,  gravity, foci, paraboloid mirror, confined trajectories}

\vspace*{0.4cm}
\keywords{\textbf{MSC-Classification:} 14H81, 37C50, 37N05 }


\section{Introduction}\label{sec:Introduction}
Over the last decades, the dynamics of a point-like particle confined to some domain under the influence of a constant gravitational force, shortly called gravitational billiards, has been studied from various perspectives. Starting from \cite{Lehihet1986} who first performed a numerical study of the simplest imaginable system, the wedge, showing that the system can be integrable for certain angles of the wedge. Further study of the system have e.g. been performed in \cite{Anderson,Korsch1999}. Extensions to other two dimensional boundaries (circular, elliptic, oval) or potentials have been performed in e.g. \cite{Costa2015,Korsch_1991}, who showed that for the quadratic and Coulomb potential the system becomes integrable. In 2015, the first study of the dynamics in a three-dimensional cone were performed by \cite{Langer2015,Langer2015_Thesis} showing that certain quantities of the two-dimensional framework map one-to-one in $\mathbb{R}^3$.

In general, the motion of the particle is highly non-trivial and a neat expression for the trajectory at each time is not accessible. For this reason, following \cite{Masalovich2014,Masalovich2020}, the confined domains for a point like particle bouncing in a parabolic, two-dimensional cavity under the influence of a homogeneous gravitational force were derived through a geometric-analytic approach. Recently associated foci curves and confined domains for bouncing inside general two-dimensional boundaries were obtained in \cite{Jaud2022_2}.

In the following the confined domains for a particle bouncing inside a rotational symmetric paraboloid under the influence of a constant gravitational force parallel to the axis of symmetry is studied. Our analysis will show that some two-dimensional features obtained e.g. in \cite{Masalovich2020,Jaud2022_2,10.1007/978-3-031-13588-0_8} will carry over (in some cases) to the three-dimensional scenario. Due to the additional rotational movement associated to conserved angular momentum along the $z-$direction further restrictions compared to the two-dimensional case will emerge.

The structure of this work presents as follows: in Section \ref{sec:Generalities} we will briefly introduce all necessary assumptions and general ideas that we will benefit from in our later analysis. Before diving into a general analysis, Section \ref{sec:Circle_Reduction} will show that under certain restrictions the three-dimensional particle motion can be reduced to the two dimensional force free case within a circle. In Section \ref{sec:General_Domain}, we will first show that all consecutive flight parabola foci points lie on a sphere of radius $R$. With this result, we derive general formulas for the confined regions depending on the system parameters. For a deeper understanding of the general results and related physics, Section \ref{sec:Limiting_Cases} will derive the associated envelope curves and therefore two-dimensional sections of the rotational confined regions for different values of the sphere radius $R$ as well as (reduced) angular momentum $l_z$. Finally a conclusion and outlook on possible future research topics related to this work is made in Section \ref{sec:conclusion}.

\section{Generalities for the paraboloid billiard}\label{sec:Generalities}
Here some general results for the motion of an particle under the influence of a constant gravitational force within a cavity are stated. All obtained results are direct generalizations from the two-dimensional case already discussed in e.g. \cite{Masalovich2020,Jaud2022_2}.

We are considering the movement for a particle of mass $m$ propagating inside a paraboloid mirror under the influence of the constant gravitational force $\vec{F}=-mg\vec{e}_z$ parallel to the $z-$axis. The equation for the boundary of the paraboloid in Cartesian coordinates $(x,y,z)$ reads
\begin{equation}
M(x,y,z)=z-\frac{x^2+y^2}{4f_M}+f_M=0.
\end{equation}
The focus of the paraboloid is centered at the origin of the coordinate system and $f_M$ denotes the focal length of this ideally reflecting mirror (see Figure \ref{fig:Generalities}).

\begin{figure}[htb]
    \centering
    \includegraphics[scale=0.8]{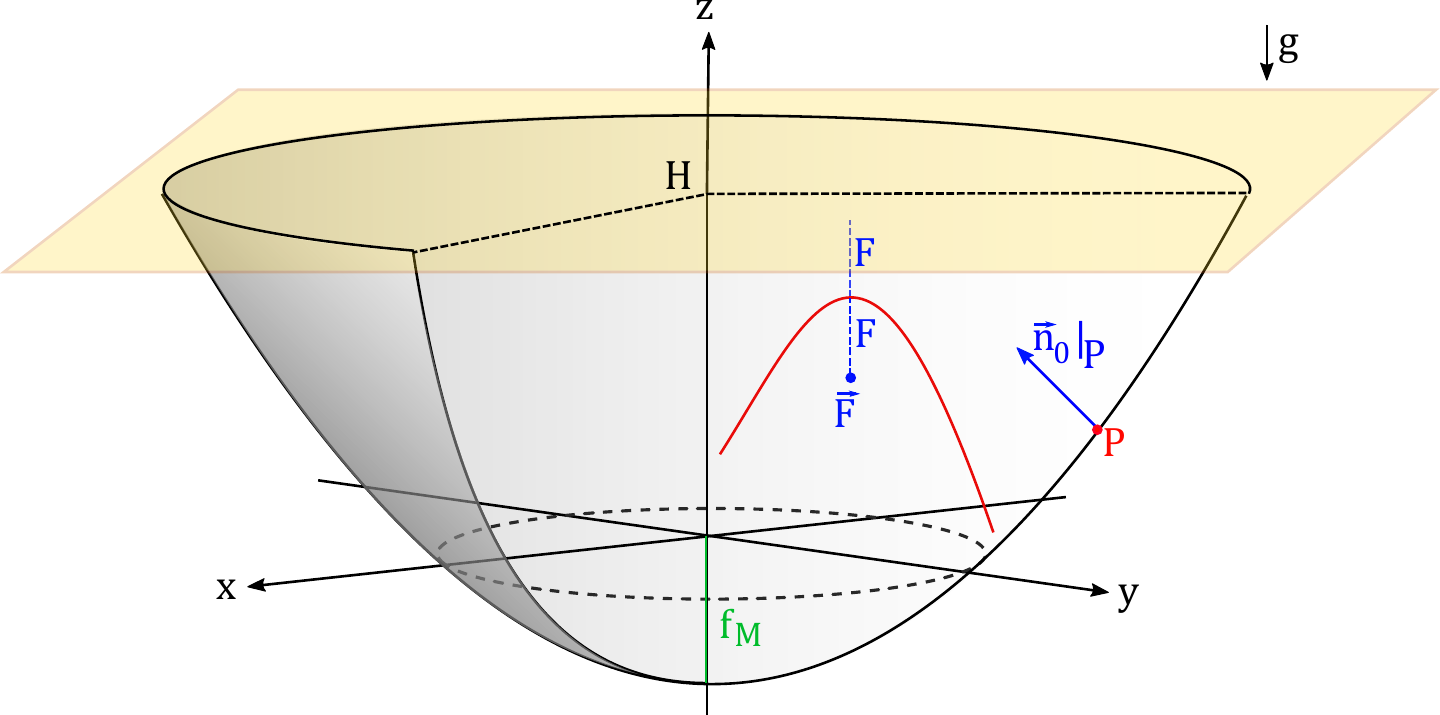}
    \caption{Visualization of main quantities for the paraboloid gravitational billiard.}
    \label{fig:Generalities}
\end{figure}

For a general point $P(x,y,z)$ along the mirror boundary, the associated normalized normal-vector pointing inside the mirror domain is given by
\begin{equation}
\vec{n}_0|_P=\frac{1}{|\vec{\nabla}M|}\vec{\nabla}M|_P=\frac{1}{\sqrt{1+\frac{x^2+y^2}{4f_M^2}}}\begin{pmatrix}
-\frac{x}{2f_M}\\
-\frac{y}{2f_M}\\
1
\end{pmatrix}.
\end{equation}
As usual, the trajectory of one specific flight parabola can be written as a function of time $t$ via
\begin{equation}
\vec{r}(t)=-\frac{1}{2}gt^2 \vec{e}_z+\vec{v}t+\vec{r}_0.
\end{equation}
All flight parabolas posses a focal length $F$ associated with the velocity components in $x-$ and $y-$direction by
\begin{equation}
    F=\frac{v_x^2+v_y^2}{2g}.
\end{equation}
Conservation of energy $E=\frac{m}{2}\vec{v}^2+mgz$ yields a maximal reachable height $H$ for all flight parabolas within the paraboloid. Considering the velocities $\vec{v}_S$ and associated heights $z_S$ at the vertex of a flight parabola we find
\begin{equation}
H=\frac{E}{mg}=\frac{v_S^2}{2g}+z_S=const..
\end{equation}
In analogy to the two-dimensional case (see \cite{Masalovich2020,Jaud2022_2}) $H$ refers to the flight parabola vertex plane (see Figure \ref{fig:Generalities}). As a direct consequence, the $z-$coordinate of the flight parabola focus point $\vec{F}$ fulfills $F_z=H-2F$. In Section \ref{sec:General_Domain} we will make use of this relation.

As a last component we state the law of reflection in vector form whenever the particle hits the boundary of the mirror at a point $P$ and gets ideally reflected. For the velocities $\vec{v}$ before and $\vec{v}'$ after the reflection holds
\begin{equation}\label{eq:law_reflection}
\vec{v}'=\vec{v}-2(\vec{n}_0\circ \vec{v})\cdot \vec{n}_0|_P=\vec{v}-\frac{2}{|\vec{\nabla}M|^2}(\vec{v}\circ \vec{\nabla}M)\cdot \vec{\nabla}M|_P.
\end{equation}
A direct consequence of the law of reflection is stated in the following Lemma which in Section \ref{sec:General_Domain} will be used in order to reduce the number free parameters of our system.
\begin{lemma}{\label{lem:ang_mom_cons}}
Angular momentum per unit mass along the $z-$direction, i.e. $l_z=L_z/m$, in the further course to be called reduced angular momentum, is a conserved quantity in particular at any point $P$ of reflection.
\end{lemma}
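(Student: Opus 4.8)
The plan is to show that the angular momentum per unit mass about the $z$-axis, namely $l_z = x v_y - y v_x$, is unchanged both during free flight and at each reflection. Free flight is immediate: under $\vec{r}(t) = -\tfrac12 g t^2 \vec{e}_z + \vec{v}t + \vec{r}_0$ the gravitational force $\vec{F} = -mg\vec{e}_z$ exerts zero torque about the $z$-axis, so $l_z$ is constant along any single flight parabola. Hence the only thing to check is that $l_z$ does not jump at a reflection point $P=(x,y,z)$ on the paraboloid.

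At a reflection, the position $P$ is fixed, so I only need to compare $x v_y - y v_x$ before and after. Using the reflection law \eqref{eq:law_reflection} in the form $\vec{v}' = \vec{v} - \frac{2}{|\vec{\nabla}M|^2}(\vec{v}\circ\vec{\nabla}M)\,\vec{\nabla}M|_P$, the change in the velocity is proportional to $\vec{\nabla}M|_P$. Therefore
\[
x v'_y - y v'_x = (x v_y - y v_x) - \frac{2(\vec{v}\circ\vec{\nabla}M)}{|\vec{\nabla}M|^2}\bigl(x\,(\partial_y M) - y\,(\partial_x M)\bigr)\Big|_P .
\]
The key computation is the bracket on the right: with $M(x,y,z) = z - \frac{x^2+y^2}{4f_M} + f_M$ one has $\partial_x M = -\frac{x}{2f_M}$ and $\partial_y M = -\frac{y}{2f_M}$, so $x\,(\partial_y M) - y\,(\partial_x M) = -\frac{xy}{2f_M} + \frac{yx}{2f_M} = 0$. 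Thus the correction term vanishes identically and $l_z' = l_z$.

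The conceptual point — and the only place where anything needs to be said — is that this vanishing is exactly the statement that the paraboloid $M=0$ is a surface of revolution about the $z$-axis: the generator of rotations about $z$, viewed as the vector field $X = (-y, x, 0)$, is tangent to the level sets of $M$ because $X \circ \vec{\nabla}M = -y\,\partial_x M + x\,\partial_y M = 0$. Since the reflection law only modifies $\vec{v}$ by a multiple of the surface normal $\vec{\nabla}M|_P$, and that normal is orthogonal to $X$, the component of $\vec{v}$ along $X$ at the point $P$ is preserved; but $x v_y - y v_x = \vec{v}\circ X$ evaluated at $P$ is precisely that component. Combining the free-flight invariance with the reflection invariance gives that $l_z$ is globally conserved, in particular across every bounce.

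I do not expect any real obstacle here; the argument is a short direct calculation. The only thing to be careful about is bookkeeping: making explicit that $P$ does not move during the reflection (so the "$x$'' and "$y$'' multiplying the velocity components are the same before and after), and noting that the normalization factor $1/|\vec{\nabla}M|^2$ is harmless since it multiplies a term that is already zero. One could alternatively phrase the whole thing via Noether's theorem — the Lagrangian $\tfrac{m}{2}\dot{\vec r}^2 - mgz$ together with the constraint surface $M=0$ is invariant under rotations about the $z$-axis — but the elementary computation above is self-contained and fits the geometric-analytic style of the paper.
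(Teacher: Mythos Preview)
Your proof is correct and follows essentially the same route as the paper: both compute the jump in $l_z$ at a reflection via the law of reflection and observe that the correction term is proportional to $x\,\partial_y M - y\,\partial_x M = (\vec{r}\times\vec{\nabla}M)_z$, which vanishes on the paraboloid. Your write-up is more explicit about the free-flight part and adds the rotational-symmetry/Noether interpretation, but the core argument is identical to the paper's.
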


\begin{proof}
It is sufficient to proof the statement at some general point  of reflection $P$ associated with the vector $\vec{r}$. Using the law of reflection \eqref{eq:law_reflection} for reduced angular momentum in $z-$direction results in:
$$l_z'=(\vec{r}\times \vec{v}')_z=(\vec{r}\times \vec{v})_z-\frac{2}{|\vec{\nabla}M|^2}(\vec{v}\circ \vec{\nabla}M)\cdot (\vec{r}\times \vec{\nabla}M)_z=(\vec{r}\times \vec{v})_z=l_z.$$

Here we used in the last step the fact, that $(\vec{r}\times \vec{\nabla}M)_z=0$ in $P$.
Therefore $l_z'=x_0v_y-y_0v_x=x_0v_y'-y_0v_x'=l_z$ is conserved. Note that $l_z$ conservation along the flight parabola is a direct consequence by the properties of the cross product.
\end{proof}

\section{Reduction to reflection along the same circle} \label{sec:Circle_Reduction}
In this section first we want to discuss the simplified case in which all consecutive points of reflection $P_i$ lie on a common circle of radius $r_0$ (and consequently height $z_0=\frac{1}{4f_M}r_0^2-f_M$) with respect to the $z-$axis. Naturally for this section polar coordinates are chosen to describe the dynamics. When viewed from above, the system can uniquely be described by the angle $\vartheta$ enclosed by two consecutive points of reflection and the 'origin' at height $z_0$ (see Figure \ref{fig:circle_non_periodic}). Without loss of generality our starting position may be chosen in polar coordinates at $P_0(r_0,0,z_0)$ and consequently $P_i(r_0,i\cdot \vartheta,z_0)$. We choose our particle, when viewed from above, traveling in counter-clockwise direction. The velocity values for the new flight parabola at the point of reflections are given by $(v_{r,i},v_{\varphi,i},v_{z,i})$.

From our setup it is clear that the allowed values for $(v_{r,i},v_{\varphi,i},v_{z,i})$  are restricted by the condition that all point of reflection $P_i$ have to lie on the same circle. In particular, such kind of behavior can only exist if the associated flight parabolas are each a copy of the same fundamental, symmetric, parabola up to an rotation by $\vartheta$ spanned by the two initial reflection points $P_0$ and $P_1$.

\begin{figure}[htb]
    \centering
    \includegraphics[scale=0.65]{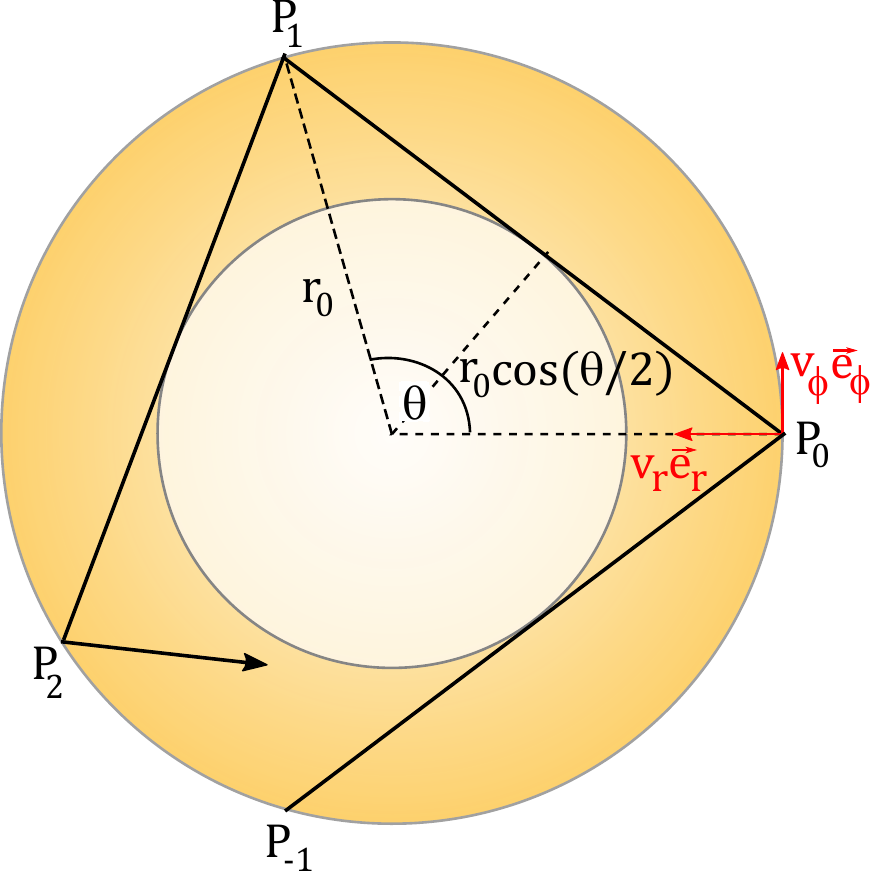}
    \caption{Projection to the parallel $x-y-$plane at height $z_0$ with relevant system parameter $\vartheta$ associated to force free billiards inside the circle.}
    \label{fig:circle_non_periodic}
\end{figure}

Due to rotational symmetry it thus is sufficient to determine the restrictions on $(v_{r,0},v_{\varphi,0},v_{z,0})=:(v_{r},v_{\varphi},v_{z})$. Applying the law of reflection \eqref{eq:law_reflection}
at $P_0$ yields expressions $(v_r',v_\varphi',v_z')$ right before the reflection. Those velocity values correspond to the flight parabola starting at $P_{-1}$ propagating to $P_0$. For all flight parabolas being the same copy one thus obtains the restriction
\begin{equation}
    |v_r|=|v_r'|~~~~\mbox{and}~~~~|v_z|=|v_z'|.
\end{equation}
Both conditions are fulfilled if 
\begin{equation}
    v_r\vec{e}_r+v_z\vec{e}_z\parallel \vec{\nabla}M|_{P_0},
\end{equation}
i.e. the $(r,z)-$components of the velocity vector stand perpendicular on the tangent plane at the point of reflection. The flight time $t=\frac{2v_z}{g}$ for reaching the initial height $z_0$ again is uniquely determined by the motion in $z-$direction. Within this time the particle starting at $P_0$ has to reach $P_1$. In the $x-y-$plane, the angle $\vartheta$ between two consecutive points of reflection (see Figure \ref{fig:circle_non_periodic}) is related to the velocity values $(v_r,v_\varphi)$ via
\begin{equation}
    \vartheta =\pi-2\arctan\left(\left|\frac{v_\varphi}{v_r}\right|\right).
\end{equation}
Demanding for the reflection points all to lie on the same circle of radius $r_0$ gives a further restriction on the system within the given flight time $t$ from $P_i$ to $P_{i+1}$. Direct calculation shows that the allowed velocity components are completely determined by the angle $\vartheta$, the radius $r_0$ of the common reflection points circle as well as the focal length $f_M$ of the paraboloid mirror:
\begin{align}
    v_r&=-\frac{r_0}{2f_M}\cdot \sqrt{gf_M\cdot \left[1-\cos\left(\vartheta\right)\right]},\\
    v_\varphi&=\frac{r_0}{2f_M}\cdot \sqrt{gf_M\cdot \left[1+\cos\left(\vartheta\right)\right]},\\ 
    v_z&=\sqrt{gf_M\cdot \left[1-\cos\left(\vartheta\right)\right]}.
\end{align}
Depending on the values for $\vartheta$  (see. e.g. \cite{Rozikov:458598,10.1007/978-3-031-13588-0_8}) we obtain periodic or non-periodic orbits, where all flight parabolas lie on a common rotational surface around the $z-$axis (see Figure \ref{fig:circle_case_2}) whose radial function is purely determined by the mirror parameters
\begin{equation}
    g(r)=\frac{r_0^2}{4f_M}-\frac{f_M\cdot r^2}{r_0^2},~~~\mbox{with}~~~ r\in [r_0\cos(\vartheta/2);r_0].
\end{equation}
Considering the flight parabolas dividing the rotational flight surface $g(r)$ consecutively into smaller sub regions, one can map this to the circle case as shown in \cite{10.1007/978-3-031-13588-0_8} that for specific values of $\vartheta$ the surface division sequence is given by an integer series.

As a remark for $\vartheta=\pi$ the $\varphi-$velocity component equals zero, i.e. $v_\varphi=0$. For this case there is no rotational motion (angular momentum being zero) and the particle bounces along $g(r)$ forming a two periodic orbit reproducing 
two-dimensional results obtained in e.g. \cite{Korsch_1991,Jaud2022_2}.

\begin{figure}
    \centering
    \includegraphics[scale=0.6]{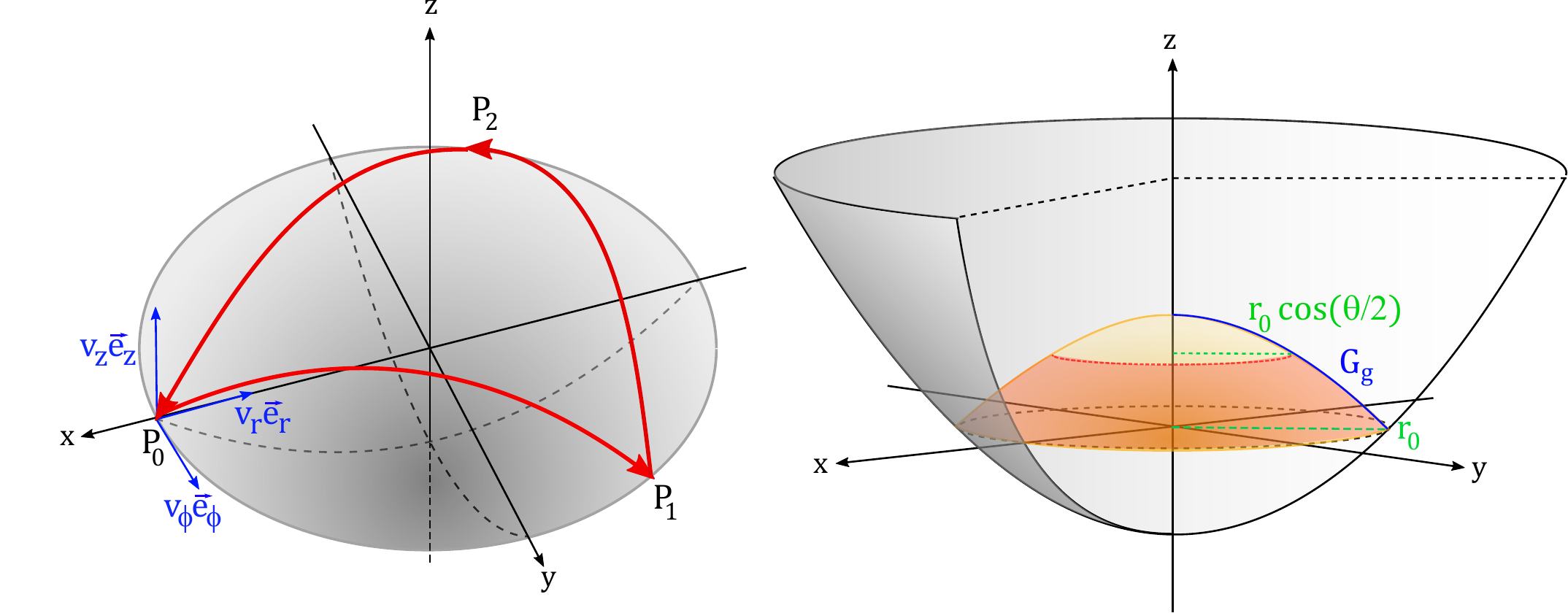}
    \caption{\textit{left}: Example for 3-periodic orbit ($\vartheta=\frac{2\pi}{3}$) along the same circle. \textit{right}: Swept out flight parabola surface (light red) for non-periodic case and $\vartheta \neq \pi$.}
    \label{fig:circle_case_2}
\end{figure}

\newpage
\section{General flight parabola domain}\label{sec:General_Domain}
In this section now we want to derive the confined domains which the particle at a given initial condition can not leave during its motion. We will use the same notations as introduced in Section \ref{sec:Generalities}. It is clear that for certain choices of initial conditions the actual flight orbits will not fill out the entire confined domains. In particular we are considering non periodic orbits in which the swept out region becomes dense. A main component for obtaining expressions for the confined domains is stated in the following theorem.

\begin{theorem}{\label{thm:foci_sphere}}
All flight parabola foci $\vec{F}_i$ at given $H$ and $l_z$ lie on the surface of a sphere with radius $R$ centered at the origin $O$, i.e. the focus of the paraboloid.
\end{theorem}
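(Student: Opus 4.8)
The plan is to show that the distance $|\vec{F}_i|$ from the origin to the $i$-th flight parabola focus is an expression involving only the conserved quantities $H$, $l_z$, the gravitational acceleration $g$, and the mirror parameter $f_M$ — hence the same for every bounce. I would parametrize a single flight parabola right after a reflection at a boundary point $P(x_0,y_0,z_0)$ with post-reflection velocity $\vec{v}=(v_x,v_y,v_z)$. From Section \ref{sec:Generalities} the focal length of this parabola is $F=(v_x^2+v_y^2)/(2g)$, and the focus has $z$-coordinate $F_z=H-2F$. The missing ingredient is the horizontal position of the focus: the focus lies on the vertical axis of the flight parabola, which passes through the vertex, so its $(x,y)$-projection is the vertex projection, namely $(x_0 + v_x v_z/g,\; y_0 + v_y v_z/g)$. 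Therefore
\begin{equation}
|\vec{F}_i|^2 = \left(x_0+\frac{v_xv_z}{g}\right)^2+\left(y_0+\frac{v_yv_z}{g}\right)^2+\left(H-2F\right)^2,
\end{equation}
and the whole task reduces to showing the right-hand side collapses to something parameter-free once one imposes the two constraints available at $P$: the point lies on the paraboloid, $z_0 = (x_0^2+y_0^2)/(4f_M) - f_M$, and energy conservation $\tfrac12 v^2 + g z_0 = gH$.

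Next I would exploit the reflection law to bring in $l_z$. By Lemma \ref{lem:ang_mom_cons}, $l_z = x_0 v_y - y_0 v_x$ is conserved, and by the same token the component of $\vec v$ in the $(r,z)$-plane and its length are controlled by the boundary geometry. Concretely, writing $r_0^2=x_0^2+y_0^2$ and splitting the horizontal velocity into a radial part $v_r$ and an azimuthal part $v_\varphi$ (so that $v_x^2+v_y^2 = v_r^2+v_\varphi^2$ and $l_z = r_0 v_\varphi$), the normal vector $\vec n_0|_P$ has no azimuthal component, so the reflection only mixes $v_r$ and $v_z$. I expect that expanding $|\vec F_i|^2$ in these coordinates, substituting $z_0=r_0^2/(4f_M)-f_M$ and $v_r^2+v_z^2 = 2g(H - z_0) - v_\varphi^2 = 2g(H-z_0) - l_z^2/r_0^2$, and using $2F = (v_r^2+v_\varphi^2)/g$, causes all of the $r_0$- and $v_r$-dependence to cancel, leaving $R^2 = |\vec F_i|^2$ as a function of $H$, $l_z$, $g$, $f_M$ alone. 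The geometric reason to expect this: the flight parabola and the mirror parabola share the focal-distance/directrix structure, and the reflection law is exactly what preserves the focus-to-origin distance — this is the three-dimensional echo of the two-dimensional fact used in \cite{Masalovich2020,Jaud2022_2}.

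The main obstacle will be the bookkeeping in the horizontal part of $|\vec F_i|^2$: unlike the $z$-coordinate, which is handed to us cleanly as $H-2F$, the terms $(x_0+v_xv_z/g)^2+(y_0+v_yv_z/g)^2 = r_0^2 + 2 r_0 v_r v_z/g + (v_r^2+v_\varphi^2)v_z^2/g^2$ must be combined with $(H-2F)^2$ and simplified using \emph{both} constraints simultaneously, and it is not a priori obvious which combination makes the cross term $2r_0 v_r v_z/g$ disappear. I anticipate that the cleanest route is to first use the reflection/tangency condition from Section \ref{sec:Circle_Reduction} — that $v_r\vec e_r+v_z\vec e_z$ behaves predictably under reflection because $v_r\vec e_r + v_z\vec e_z$ is reflected across the line spanned by $\nabla M|_P$ — to express $v_r v_z$ and $v_z^2$ through $r_0$ and the total in-plane speed, and only then substitute the paraboloid relation for $z_0$. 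Once the algebra is organized this way I expect a short computation to yield an explicit closed form for $R$, which can then be fed directly into the derivation of the confined domains in the remainder of Section \ref{sec:General_Domain}.
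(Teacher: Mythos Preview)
Your central claim --- that $|\vec{F}_i|^2$ will collapse to an expression in $H$, $l_z$, $g$, $f_M$ alone --- is false, and this is not a bookkeeping issue but a genuine obstruction. The radius $R$ is an \emph{independent} conserved quantity, not a function of $(H,l_z)$; this is precisely why the paper parametrizes the confined domains by the triple $(H,l_z,R)$ in the remainder of Section~\ref{sec:General_Domain}. A one-line check: take $l_z=0$ and the reflection point at the vertex of the mirror, so $r_0=0$, $z_0=-f_M$, and the normal is $\vec{e}_z$. Then with $a:=v_x^2/g$ and $b:=v_z^2/g$ one has $a+b=2(H+f_M)$ and
\[
|\vec{F}|^2=\Bigl(\tfrac{v_xv_z}{g}\Bigr)^2+\Bigl(H-\tfrac{v_x^2}{g}\Bigr)^2=ab+(H-a)^2=2af_M+H^2,
\]
which still depends on the free parameter $a$. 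So the cancellation you are hoping for in the cross term $2r_0v_rv_z/g$ simply does not happen: at fixed $(H,l_z,r_0)$ the direction of $(v_r,v_z)$ is unconstrained, and $|\vec{F}|^2$ genuinely varies with it.

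What the theorem actually asserts is weaker than what you are trying to prove: not that $|\vec{F}|$ is determined by the obvious integrals, but that it is itself a new integral --- i.e.\ that it is \emph{preserved under reflection}. The paper's argument accordingly compares two consecutive foci at a single bounce: write $\vec{F}$ for the incoming parabola and $\vec{F}'$ for the outgoing one at the same boundary point $P$, apply the reflection law \eqref{eq:law_reflection} to pass from $\vec{v}$ to $\vec{v}'$, and verify $|\vec{F}|^2=|\vec{F}'|^2$ directly. In your polar decomposition this is the natural thing to do, since reflection fixes $v_\varphi$ and acts as a planar reflection on $(v_r,v_z)$ across the line spanned by $\nabla M|_P$; the computation then reduces to a two-dimensional identity in the $(r,z)$-plane. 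Reorganize your proposal around the comparison $|\vec{F}|^2-|\vec{F}'|^2=0$ rather than around eliminating variables from $|\vec{F}|^2$ itself.
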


\begin{proof}
Without loss of generality consider let the point of reflection being located in Cartesian coordinates at $P(r_0,0,z_0)$ and the associated velocity vector right before the reflection take the form $\vec{v}=(v_x,v_y,v_z)^T$. The flight parabola focus then is given by
\begin{equation}
    \vec{F}=\begin{pmatrix}
        r_0\\
        0\\
        2z_0-H
    \end{pmatrix}+\frac{v_z}{g}
    \begin{pmatrix}
        v_x\\
        v_y\\
        v_z
    \end{pmatrix}.   
\end{equation}
Applying the law of reflection in $P$ yields the velocity vector $\vec{v}'$ after the reflection and consequently $\vec{F}'$. A direct but lengthy calculation shows that 
$$|\vec{F}|^2=|\vec{F}'|^2=R^2=const.,$$
i.e. both foci lie on a common sphere of radius $R$. Due to rotational symmetry of the system, conservation of energy as well as reduced angular momentum conservation in $z-$direction, all consecutive foci lie on the same sphere of radius $R$. Since $\vec{r}_0$ and the initial velocity $\vec{v}_0$ have been chosen arbitrarily (under assumption of same total energy) all consecutive foci have to saturate this equality. Note that this is a direct generalization of the two-dimensional case.


\end{proof}
Now we are in the position to reduce the six-dimensional phase space with the knowledge of Theorem \ref{thm:foci_sphere}, conservation of energy and reduced angular momentum as well as rotational invariance, to two free parameters corresponding to confined domains which the particle at given values $(H,l_z,R)$ cannot leave.

The vertex $\vec{S}$ of each flight parabola in spherical coordinates thus can be written as
\begin{equation}
    \vec{S}(R,\varphi,\vartheta)=\vec{F}(R,\varphi,\vartheta)+F\vec{e}_z=R\begin{pmatrix}
        \cos(\varphi)\sin(\vartheta)\\
        \sin(\varphi)\sin(\vartheta)\\
        \cos(\vartheta)
    \end{pmatrix}+F\vec{e}_z=\begin{pmatrix}
        R\cos(\varphi)\sin(\vartheta)\\
        R\sin(\varphi)\sin(\vartheta)\\
        \frac{H+R\cos(\vartheta)}{2}
        \end{pmatrix}.
\end{equation}
Thereby we used that for the focal length of Section \ref{sec:Generalities} holds $F=\frac{H-R\cos(\vartheta)}{2}$. Note that $\vartheta$ in this case corresponds to the polar angle (compare Figure \ref{fig:3d_parameters}) in contrast to the definition for $\vartheta$ of Section \ref{sec:Circle_Reduction}. Energy conservation yields an expression for the absolute value of the velocity $|\vec{v}_S|$ at the vertex
\begin{equation}
   H=\frac{\vec{v}_S^2}{2g}+z_S ~~\leftrightarrow ~~ v_S=|\vec{v}_S|=\sqrt{2g(H-z_S)}=\sqrt{g(H-R\cos(\vartheta)},
\end{equation}
where $H$ is the height of the directrix plane and $z_S$ is the associated vertex height (consider Figure \ref{fig:3d_parameters}). We choose $v_S$ to be positive; negative values simply correspond to a time inverted system. Since the orientation of $\vec{v}_S$ is not fixed by the  equation above we may take a general ansatz $\vec{v}_S=v_S (\cos(\varphi'),\sin(\varphi'),0)^T$. Reduced angular momentum conservation along the $z-$axis (compare Lemma \ref{lem:ang_mom_cons})
\begin{equation}
    l_z=(\vec{S}\times \vec{v}_S)_z=R\sqrt{g(H-R\cos(\vartheta))}\cdot \sin(\vartheta)\cdot \sin(\varphi'-\varphi)=const.,
\end{equation}
restricts the allowed values for the orientation of $\vec{v}_S$ related to the rotation angle $\varphi'$ as follows
\begin{equation}
    \varphi'=\varphi+\arcsin\left(\frac{l_z}{R\sin(\vartheta)\cdot \sqrt{g(H-R\cos(\vartheta))}}\right).
\end{equation}

\begin{figure}[htb]
    \centering
    \includegraphics{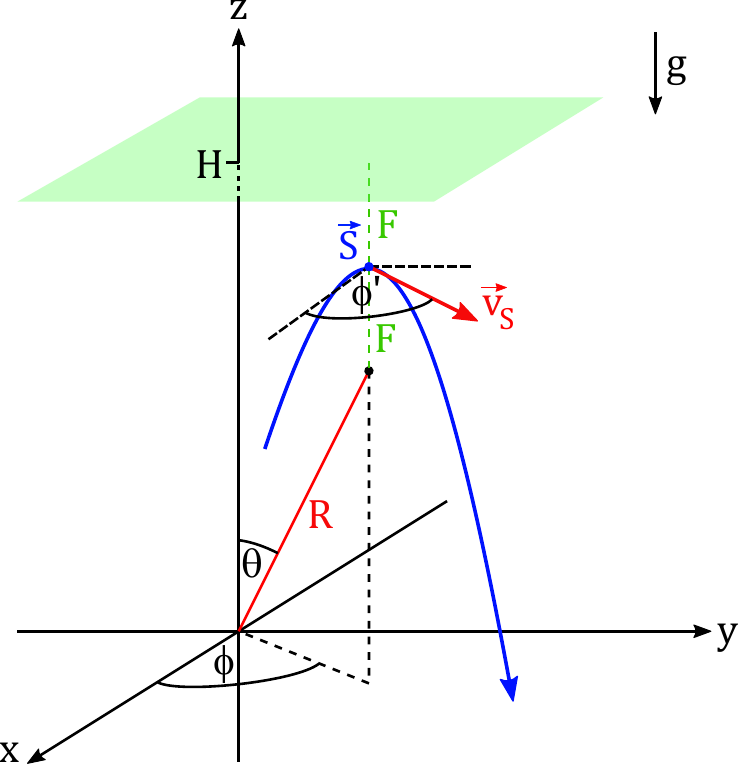}
    \caption{Flight parabola setup.}
    \label{fig:3d_parameters}
\end{figure}

Due to rotational symmetry it is sufficient to consider the case $\varphi=0$ from here on. The corresponding allowed flight parabolas
\begin{equation}\label{eq:parabola_functions}
    \vec{r}(t,\vartheta)=\begin{pmatrix}
        R\sin(\vartheta)+t\cdot \sqrt{g(H-R\cos(\vartheta))-\frac{l_z^2}{R^2\sin^2(\vartheta)}}\\
        t\cdot \frac{l_z}{R\sin(\vartheta)}\\
        -\frac{1}{2}gt^2+\frac{H+R\cos(\vartheta)}{2}
    \end{pmatrix},
\end{equation}
at fixed $(H,l_z,R)$ form a one-parameter family of curves in $\mathbb{R}^3$. Note that the $x-$component velocity term restricts the allowed values for $\vartheta$ at a given value of $l_z$ according to (compare Figure \ref{fig:theta_restriction_given_lz})
\begin{equation}\label{eq:theta_restriction}
   J(H,R,\vartheta)= gR^2\sin^2(\vartheta)\cdot (H-R\cos(\vartheta))\geq l_z^2~~~~\leftrightarrow ~~~~ \vartheta \in [\vartheta_0;\vartheta_1].
\end{equation}
Clearly $l_z$ is the main limiting factor to $\vartheta$ with large $l_z$ associated to a motion farther away from the $z-$axis as in the case for $l_z$ being small, resulting in the possibility of approaching the $z-$axis. Further $l_z$ is bound from above by the maximum of $J(H,R,l_z)$ which is saturated for
\begin{equation}\label{eq:theta_max}
    \cos(\vartheta_{max})=\frac{H- \sqrt{H^2+3R^2}}{3R},
\end{equation}
and therefore takes the value
\begin{equation}\label{eq:J_max}
    J(H,R,\vartheta_{max})=\frac{2}{27}g\left(\sqrt{H^2+3R^2}+2H\right)\left(H\left(\sqrt{H^2+3R^2}-H\right)+3R^2\right).
\end{equation}
\begin{figure}[htb]
    \centering
    \includegraphics[scale=0.7]{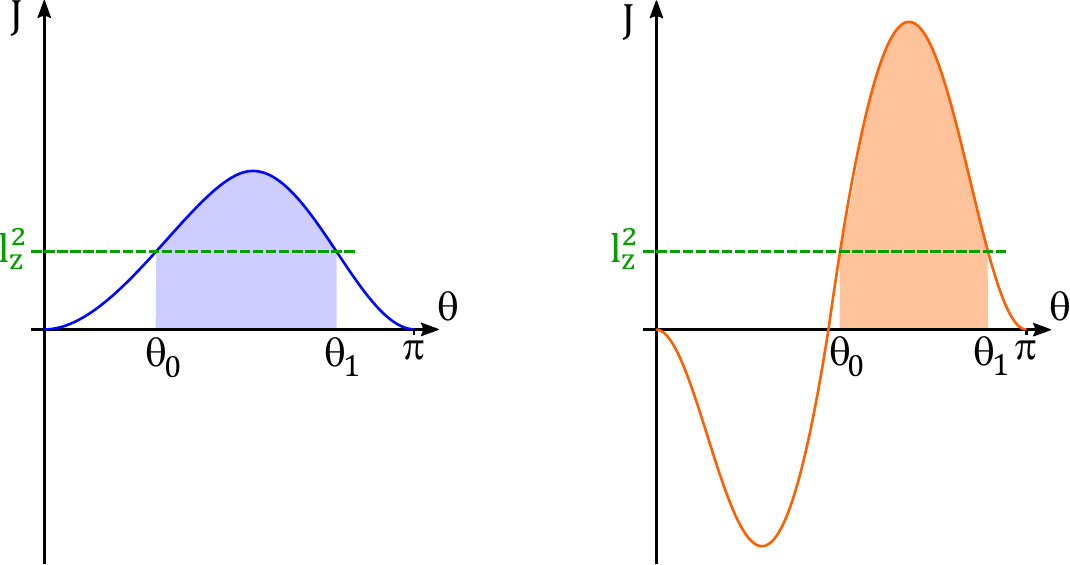}
    \caption{Qualitative restriction of allowed $\vartheta$ values at given $l_z^2$. \textit{left} for $H>R> 0$ and \textit{right} for $0<H<R$.}
    \label{fig:theta_restriction_given_lz}
\end{figure}

\begin{theorem}{\label{thm:height_fct}}
The rotational symmetric allowed propagation heights $h$ of the particle at a given radial distance $r$ and angle $\vartheta$ are given by
$$\label{eq:height_functions}\textstyle
    h_\pm(r,\vartheta)=\frac{H+R\cos(\vartheta)}{2}-\frac{g}{2}\left(\frac{\sqrt{r^2\cdot g(H-R\cos(\vartheta))-l_z^2}\pm \sqrt{R^2\sin^2(\vartheta)\cdot g(H-R\cos(\vartheta))-l_z^2}}{g(H-R\cos(\vartheta))}\right)^2,
$$
with the restriction on the radius $r\geq \frac{l_z}{\sqrt{g(H-R\cos(\vartheta))}}$.
\end{theorem}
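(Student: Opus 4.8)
The statement only asks for the heights at which a single admissible flight parabola — the one labelled by $\vartheta$ in \eqref{eq:parabola_functions} — meets the cylinder of radius $r$ about the $z$-axis, so the plan is to impose $x(t)^2+y(t)^2=r^2$, solve for the flight time $t$, and read off $z(t)$.

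First I would write the two constant horizontal velocity components appearing in \eqref{eq:parabola_functions} as $v_x=\sqrt{g(H-R\cos\vartheta)-l_z^2/(R^2\sin^2\vartheta)}$ and $v_y=l_z/(R\sin\vartheta)$, and record the identity $v_x^2+v_y^2=g(H-R\cos\vartheta)$; this is the cancellation that makes the computation collapse, since it removes $l_z$ from the $t^2$-term. With it,
\[
x(t)^2+y(t)^2=R^2\sin^2(\vartheta)+2R\sin(\vartheta)\,v_x\,t+g(H-R\cos(\vartheta))\,t^2 ,
\]
and setting this equal to $r^2$ gives a quadratic in $t$ with leading coefficient $g(H-R\cos\vartheta)>0$.

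Next I would solve this quadratic. Using $R^2\sin^2(\vartheta)\,v_x^2=R^2\sin^2(\vartheta)\,g(H-R\cos\vartheta)-l_z^2$, both the linear coefficient and the discriminant simplify: the discriminant becomes a positive multiple of $r^2 g(H-R\cos\vartheta)-l_z^2$, while $R\sin(\vartheta)\,v_x=\sqrt{R^2\sin^2(\vartheta)\,g(H-R\cos\vartheta)-l_z^2}$. Hence the two solutions are
\[
t_{\pm}=\frac{\pm\sqrt{r^2 g(H-R\cos\vartheta)-l_z^2}-\sqrt{R^2\sin^2(\vartheta)\,g(H-R\cos\vartheta)-l_z^2}}{g(H-R\cos\vartheta)} ,
\]
and substituting these into $z(t)=-\tfrac12 g t^2+\tfrac12(H+R\cos\vartheta)$, together with $(\pm b-a)^2=(b\pm a)^2$, reproduces the asserted $h_\pm(r,\vartheta)$ verbatim. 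Finally, the radius restriction $r\ge l_z/\sqrt{g(H-R\cos\vartheta)}$ is exactly the non-negativity of the discriminant, i.e. the condition that \eqref{eq:parabola_functions} actually reaches radius $r$; geometrically, the radial distance along the flight parabola is minimised at $t=-R\sin(\vartheta)v_x/(g(H-R\cos\vartheta))$ with minimum value $l_z/\sqrt{g(H-R\cos\vartheta)}$.

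I do not expect a genuine obstacle here: the computation is elementary once the identity $v_x^2+v_y^2=g(H-R\cos\vartheta)$ is spotted. The only points requiring care are the sign bookkeeping when taking the square root of $t_\pm^2$ (which merely relabels the two branches) and the remark that for $r\ge l_z/\sqrt{g(H-R\cos\vartheta)}$ \emph{both} roots $t_\pm$ lie on the physical flight parabola, so that both branches $h_\pm$ are attained; they coincide — forming the lateral envelope bounding the confined domain — precisely when $r=l_z/\sqrt{g(H-R\cos\vartheta)}$ or $\vartheta$ reaches an endpoint of the interval $[\vartheta_0;\vartheta_1]$ of \eqref{eq:theta_restriction}.
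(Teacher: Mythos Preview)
Your proposal is correct and follows exactly the approach sketched in the paper's own proof: set $r^2=x(t,\vartheta)^2+y(t,\vartheta)^2$ from \eqref{eq:parabola_functions}, solve the resulting quadratic for $t$, and substitute into the $z$-component, with the radius restriction emerging as non-negativity of the discriminant. You simply make explicit the algebra (in particular the identity $v_x^2+v_y^2=g(H-R\cos\vartheta)$) that the paper leaves implicit; note only that $(\pm b-a)^2=(b\mp a)^2$ rather than $(b\pm a)^2$, but as you already observe this merely swaps the labels of the two branches.
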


\begin{proof}
Equation \eqref{eq:parabola_functions} defines possible trajectories at given $(H,R,l_z)$. Considering the associated radial distance $r^2=x(t,\vartheta)^2+y(t,\vartheta)^2$ one can express the $t$ variable in terms of $r$ and $\vartheta$. Inserting this expression into the $z-$component of \eqref{eq:parabola_functions} yields the expression for the allowed heights $h_\pm$, where the two different solutions correspond to the left and right parabola arc measured from the minimal distance $r_{min}=l_z/\sqrt{g(H-R\cos(\vartheta))}$. 
\end{proof}

In order to obtain expressions for the associated envelope curves we define a new quantity
\begin{equation}\label{eq:envelope_defining_function}
K(z,r,\vartheta,H,R,l_z):= z-h_\pm(r,\vartheta).
\end{equation}
The envelope curves restricting the confined domains then are obtained eliminating $\vartheta$ by solving the following system of equations (see 
\cite{bruce_giblin_1992})
\begin{equation} \label{eq:envelope}
K(z,r,\vartheta,H,R,l_z)=0~~~~~~~\mbox{and} ~~~~~~~    \frac{\partial K}{\partial \vartheta}=0.
\end{equation}
A computer animated picture of allowed flight parabolas is shown in Figure \ref{fig:plane_domains}. In the next section our obtained results will be illustrated in various extremal limits.

\begin{figure}[htb]
    \centering
    \includegraphics[scale=0.6]{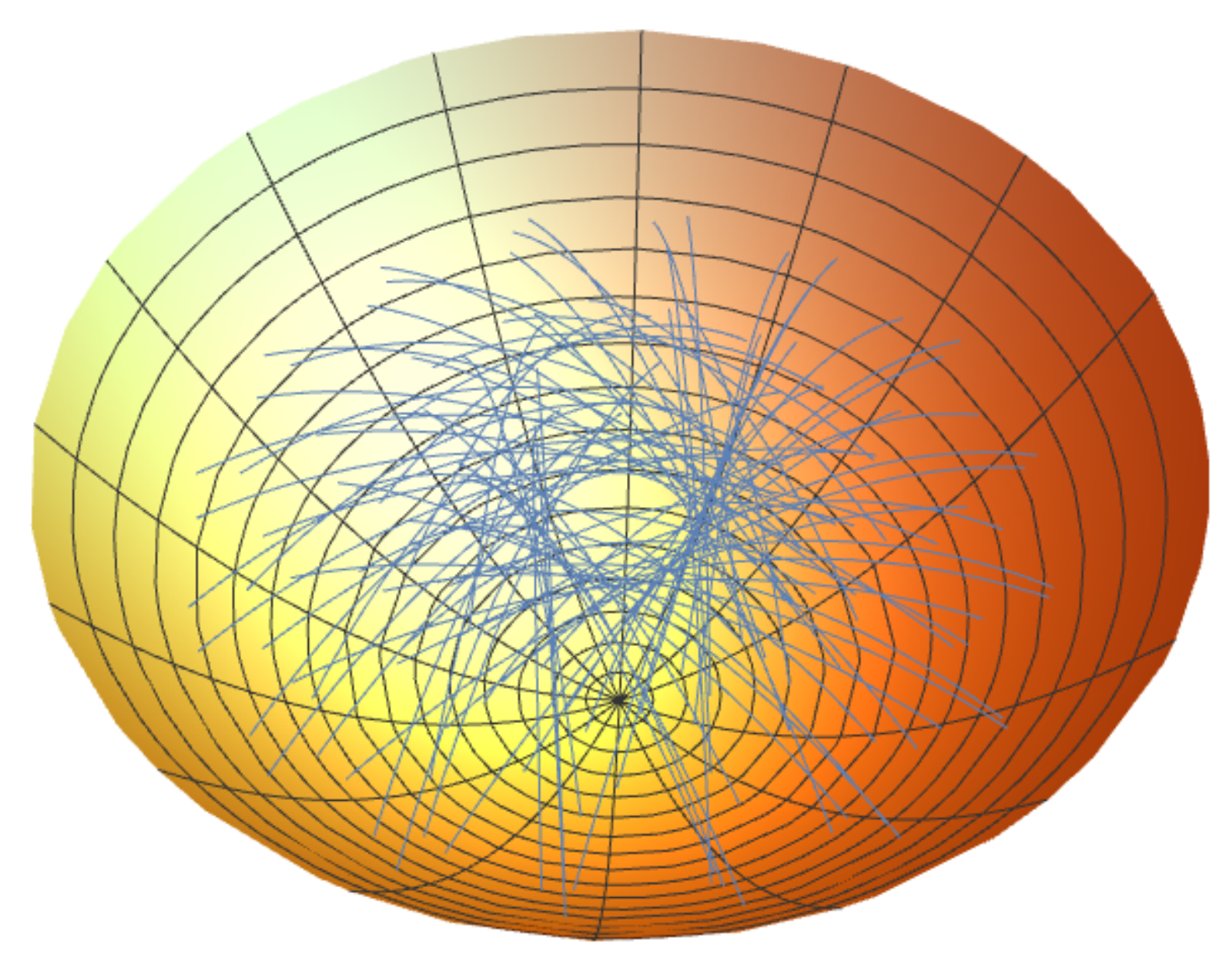}
    \caption{Computer animated flight trajectories.}
    \label{fig:plane_domains}
\end{figure}

\newpage
\section{Discussion of limiting cases}\label{sec:Limiting_Cases}

In this section four limiting cases in terms of the reduced angular momentum $l_z$ are discussed. For all cases we determine the associated height function from Theorem \ref{thm:height_fct} and calculate, if possible, the corresponding envelope curves restricting the motion of the particle, in general, to a rotational symmetric region. All results of this section are displayed in Figure \ref{fig:confined_domains} for illustrative purposes.

\subsection{The $l_z=0$ case}
In the simplest case of no reduced angular momentum ($l_z=0$) the height functions of Theorem \ref{thm:height_fct} significantly simplify to 
\begin{equation}
   h_\pm(r,\vartheta)= \frac{H+R\cos(\vartheta)}{2}-\frac{(r\pm R\sin(\vartheta))^2}{2(H-R\cos(\vartheta))}.
\end{equation}
 Solving the system \eqref{eq:envelope} yields the envelope curves
\begin{equation}
    c_\pm(r)=\frac{H\pm R}{2}-\frac{r^2}{2(H\pm R)}.
\end{equation}
This reproduces the results obtained geometrically in \cite{Masalovich2020} and analytically in \cite{Jaud2022_2}. Since the motion lies in a common plane containing the $z-$axis it is clear that $r$ can take values in $\mathbb{R}$.

\subsection{The small $l_z$ case}

For $l_z$ small the deviation from the $l_z=0$ case is marginal. Thus one can conclude that in first approximation one obtains the same envelope curves $c_\pm(r)$ as before. An additional restriction comes from the fact that the allowed values for $r$ are bound from below by $r\geq \frac{l_z}{\sqrt{g(H-R\cos(\vartheta))}}$. If this inequality is saturated, i.e. we consider the case of minimal radial distance in terms of $\vartheta$, one can solve $r= \frac{l_z}{\sqrt{g(H-R\cos(\vartheta))}}$ for $\cos(\vartheta)$ and insert this expression into the height functions of Theorem \ref{thm:height_fct} yielding one additional (approximate) envelope curve associated with the angular momentum barrier as
\begin{equation}
    c_0(r)= \frac{(H^2-R^2)g}{2l_z^2}\cdot r^2+\frac{gr^4}{2l_z^4}.
\end{equation}
This envelope curve is reminiscent of the Higgs-potential in particle physics, in which in the cases $R>H$ one obtains the well known Mexican-hat like function.

\subsection{The large $l_z$ case}
In the large $l_z$ limit the second square root appearing in the height functions of Theorem \ref{thm:height_fct} in lowest order can be neglected since $l_z^2\approx J(H,R,\vartheta_{max})$. The associated envelope curves thus approximately resemble the height functions for small variations of $\vartheta$ 
\begin{equation}
    \tilde{c}_\pm =\frac{H+R\cos(\vartheta)}{2}-\frac{r^2-R^2\sin^2(\vartheta_{max})}{2(H-R\cos(\vartheta))},
\end{equation}
where $\vartheta \in [\vartheta_{max}-\delta;\vartheta_{max}+\delta]$ for $\delta$ small.

\subsection{The maximal $l_z$ case}
The maximal value for $l_z$ follows from \eqref{eq:J_max} and is given by
\begin{equation}
    l_z=\sqrt{J(H,R,\vartheta_{max})}.
\end{equation}
In these cases, the second square root for the height function of Theorem \ref{thm:height_fct} vanishes, resulting in a single height function for $\vartheta=\vartheta_{max}$ as
\begin{equation}
    d(r)=\frac{H+R\cos(\vartheta_{max})}{2}-\frac{r^2-R^2\sin^2(\vartheta_{max})}{2(H-R\cos(\vartheta_{max}))},
\end{equation}
with $r\geq R\sin(\vartheta_{max})$. Note that for $R<H$ this reproduces the results of Section \ref{sec:Circle_Reduction}. For $R>H$ it depends on the mirror boundary if the condition $r\geq R\sin(\vartheta_{max})$ can be saturated, cases exist in which the maximal $l_z-$value is not accessible due to the mirror boundary.

\begin{figure}[htb]
    \centering
    \includegraphics[scale=0.55]{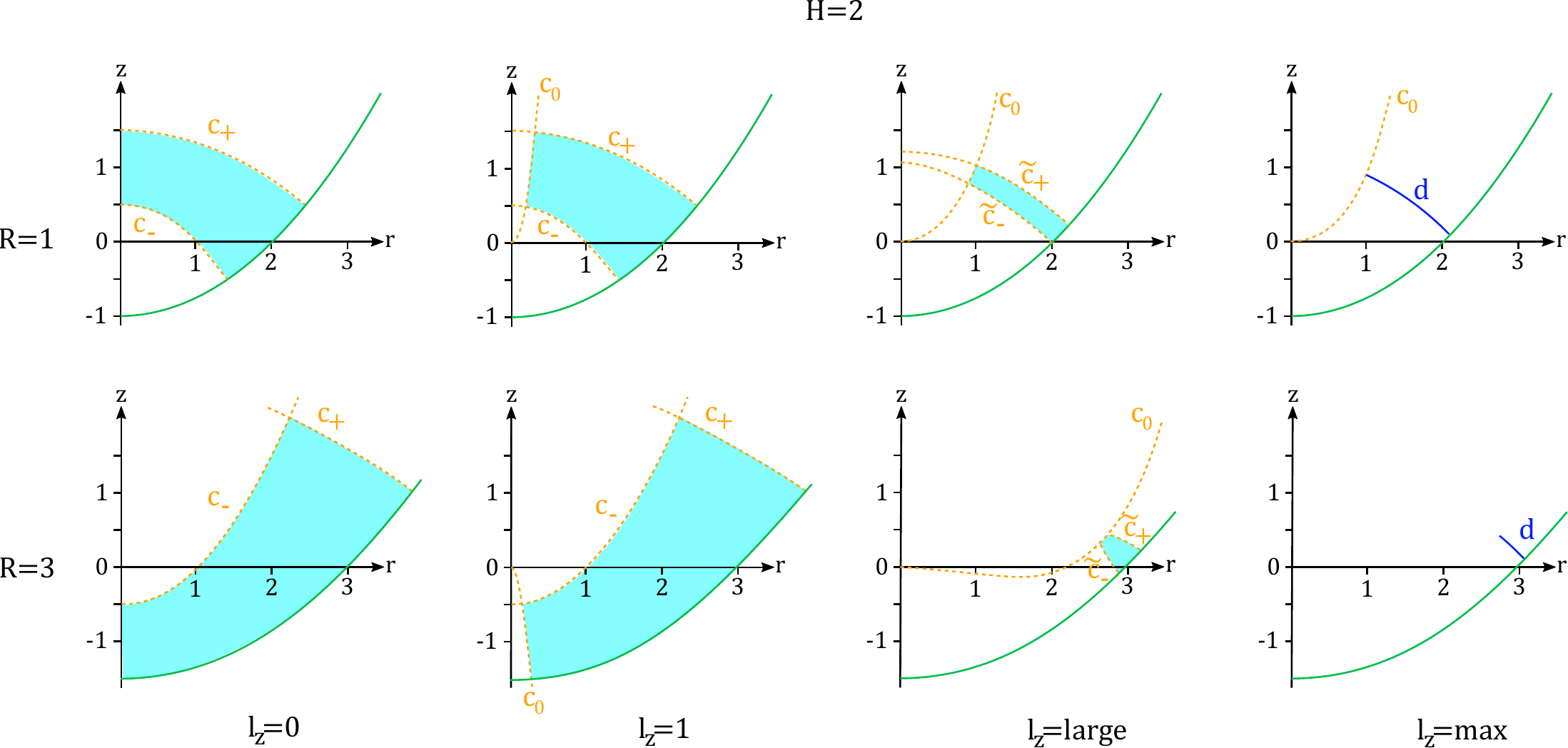}
    \caption{Two-dimensional section of confined domains associated to the four discussed limiting cases. The three-dimensional confined regions are obtained by rotation around the $z-$axis.}
    \label{fig:confined_domains}
\end{figure}


\newpage
\section{Conclusion and Outlook}\label{sec:conclusion}
In this work the rotational symmetric confined domains of a point-like particle bouncing inside a paraboloid cavity under the influence of a homogeneous gravitational field in terms of the directrix height $H$, reduced angular momentum $l_z$ and foci sphere radius $R$ were derived. It has been shown that some two-dimensional results map one to one to the 3D case. In addition, reduced angular momentum conservation (absent in 2D) yields some additional physics in 3D. 

For future works it would be interesting to generalize our results to other rotational symmetric domains. Also, the motion in a non-constant, e.g. Coulomb-field, would be of interest.
%
\vspace*{1cm}
\section*{Acknowledgements}
We would like to thank Dan Reznik for the inspiring conversation leading to this work.
%



\vspace*{1cm}



\end{document}